\documentclass[letterpaper, 10 pt, conference]{ieeeconf}  
\IEEEoverridecommandlockouts    
\overrideIEEEmargins
\usepackage{comment}
\usepackage{cite}
\newcommand{\eat}[1]{}
\usepackage{booktabs}
\usepackage{color}
\usepackage [autostyle, english = american]{csquotes}
\MakeOuterQuote{"}
\usepackage{xcolor}

\usepackage{multicol}

\usepackage{mathtools}
    
\usepackage{amsmath}
\usepackage{amstext}
\usepackage{amssymb}
\usepackage{amsfonts}
\usepackage{float}

\usepackage{amsthm}  
\newtheorem{theorem}{Theorem}
\newtheorem{lemma}{Lemma}
\newtheorem{definition}{Definition}
\newtheorem{corollary}{Corollary}

\newtheorem{remark}{Remark}

\newtheorem{proposition}{Proposition}

\newtheorem{assumption}{Assumption}
\newtheorem{problem}{Problem}
\usepackage{subfig}
\usepackage{caption}
\usepackage{todonotes}

\makeatletter

\newcommand{\Rmnum}[1]{\expandafter\@slowromancap\romannumeral #1@}
\usepackage{savesym}

\usepackage{algorithm}
\usepackage{algorithmicx} 
\usepackage{algpseudocode} %
\savesymbol{AND}
\usepackage[group-separator={,},group-minimum-digits={3}]{siunitx}

\usepackage{graphicx} 
\usepackage{epsfig} 

\usepackage{times} 
\usepackage{amsmath} 
\usepackage{amssymb}  
\usepackage{comment}
\makeatletter
\let\NAT@parse\undefined
\makeatother
\usepackage{hyperref}
\hypersetup{
   colorlinks=true,
    linkcolor= blue,
    allcolors=blue,
    citecolor = blue,
    filecolor=black,      
    urlcolor=blue,
    }
\usepackage{mathrsfs}
\DeclareMathOperator*{\E}{\mathbb{E}}
\DeclareMathOperator\erf{erf}

\usepackage{blindtext}

\title{\LARGE \bf
Robust Learning-Based Trajectory Planning for Emerging Mobility Systems }
\author{Behdad Chalaki, \emph{IEEE Student Member}, Andreas A. Malikopoulos, \emph{IEEE Senior Member}%
\thanks{This research was supported in part by ARPAE's NEXTCAR program under the award number DE- AR0000796 and by the Delaware Energy Institute (DEI). This support is gratefully acknowledged.}%
\thanks{The authors are with the Department of Mechanical Engineering, University of Delaware, Newark, DE 19716 USA (emails: \texttt{\{bchalaki;andreas\}@udel.edu}).}}

\begin{document}

\maketitle
\thispagestyle{empty}
\pagestyle{empty}

\begin{abstract}
In this paper, we extend a framework that we developed earlier for coordination of connected and automated vehicles (CAVs) at a signal-free intersection to incorporate uncertainty. Using the possibly noisy observations of actual time trajectories and leveraging Gaussian process regression, we learn the bounded confidence intervals for deviations from the nominal trajectories of CAVs online.
Incorporating these confidence intervals, we reformulate the trajectory planning as a robust coordination problem, the solution of which guarantees that constraints in the system are satisfied in the presence of bounded deviations from the nominal trajectories. We demonstrate the effectiveness of our extended framework through a numerical simulation.

\end{abstract}

\section{Introduction}

\PARstart{T}{he} growing population in urban areas has led to traffic congestion, increasing delays, and environmental concerns \cite{Schrank2019}. However, the introduction of communication technologies and computational capabilities into connected and automated vehicles (CAVs) has the potential to address these challenges. Through these advancements, we are transitioning to an \textit{emerging mobility system}, in which CAVs can make better decisions leading to reductions of energy consumption, travel delays, and improvements to passengers safety \cite{Wadud2016}. 

After the seminal work of Athans \cite{Athans1969} on safely coordinating vehicles at merging roadways, several research efforts have explored the benefits of coordinating CAVs in traffic scenarios using a bi-level framework which consists of (1) an \emph{upper-level} optimization that yields, for each CAV, the optimal time to exit a predetermined control zone of the intersection; and (2) a \emph{low-level} optimization that yields for the CAV its optimal control input (acceleration/deceleration) to achieve the optimal time derived in the upper-level subject to the state, control, and safety constraints. 
There have been a variety of approaches in the literature to solve the upper-level optimization problem, including first-in-first-out (FIFO) queuing policy 
\cite{Malikopoulos2017,Rios-Torres2017b,Beaver2020DemonstrationCity}, heuristic Monte Carlo tree search methods \cite{xu2019cooperative,xu2020bi}, centralized optimization techniques \cite{guney2020scheduling,hult2018optimal}, and job-shop scheduling \cite{chalaki2020TITS, fayazi2018mixed}. Given the solution of the upper-level optimization problem, the low-level optimization for each CAV addresses a constrained optimal control problem using model predictive control (MPC) \cite{hult2018optimal,kim2014mpc,campos2014cooperative,kloock2019distributed,du2018hierarchical}, or standard optimal control techniques resulting in closed-form analytical solutions  \cite{Malikopoulos2017,chalaki2020TCST,Malikopoulos2020,zhang2019decentralized}. However, the latter approach leads to a system of non-linear equations that might be challenging, in some instances, to solve in real time. To address this problem, a different approach was recently proposed in \cite{Malikopoulos2020} consisting of a single optimization level aimed at both minimizing energy consumption and improving the traffic throughput. In this approach, each CAV computes the optimal exit time of the control zone corresponding to an unconstrained energy optimal trajectory which satisfies all the state, control, and safety constraints.

Although there have been several studies addressing the problem of coordination of CAVs, only a limited number of papers considers uncertainty. Xiao et al. \cite{xiao2020bridging} employed a control barrier function (CBF) to track the optimal control trajectory in the presence of noise process in the model. 
Zhou et al.\cite{zhou2017rolling} proposed a centralized receding horizon stochastic optimal control strategy for adaptive cruise control and cooperative adaptive cruise control of platoons of vehicles to incorporate noise in the system dynamics and measurements. 
\eat{However, their computational time \eat{of this approach}increases exponentially with respect to the prediction horizon and platoon size.}
In another effort \cite{chalaki2020TCST}, a known bounded steady-state error was considered in tracking the optimal position of the CAVs \eat{resulted from a bi-level framework }aimed at minimizing energy consumption and improving the traffic throughput. \eat{Several other efforts have considered addressing the uncertainty in the single autonomous vehicle using sampling-based techniques \cite{berntorp2019motion,vitus2013probabilistic}, stochastic MPC
\cite{carvalho2016predictive}, stochastic reachability method \cite{gao2019stochastic}, and  scenario-based MPC \cite{schildbach2015scenario}.}

In this paper, we build upon the framework introduced in \cite{Malikopoulos2020} and enhance it by reformulating the coordination of CAVs as a robust coordination problem. We employ Gaussian process (GP) regression to learn the deviation of CAVs from their nominal time trajectory and obtain confidence intervals on the unknown errors of nominal trajectories based on the noisy observations of CAVs. The obtained confidence intervals can then be used to solve the robust coordination problem using a worst-case scenario approach.  
A GP is defined as a collection of random variables, any finite number of which have a joint Gaussian distribution, and can be used to describe a distribution over an infinite-dimensional space of functions  \cite{rasmussen2003gaussian}.
GP models have been used in various control applications, including ship trajectory prediction \cite{rong2019ship}, modeling and control of buildings \cite{jain2018learning}, and safe-learning for ground and aerial vehicles \cite{ostafew2016robust,berkenkamp2018safe,fisac2018general}.

To the best of our knowledge, this is the first attempt to model uncertainty in decentralized coordination of CAVs. Therefore, we believe that this paper advances the state of the art in the following ways. First, rather than not considering uncertainty for the vehicle's trajectory planning \cite{Malikopoulos2017,Malikopoulos2020,zhang2019decentralized,guney2020scheduling,hult2018optimal,kim2014mpc,campos2014cooperative,kloock2019distributed,du2018hierarchical} or assuming a known bound \cite{chalaki2020TCST}, we employ GP regression to model uncertainty and incorporate it in our coordination framework. Second, by considering uncertainty in the vehicle's trajectory planning, we ensure safety in the presence of uncertainty without sacrificing optimality, in contrast to the methods with reactive mechanisms such as CBF \cite{xiao2020bridging} which can potentially result in an optimality gap. 

The rest of the paper is structured as follows. 
In Section \Rmnum{2}, we introduce the modeling framework, and in Section \Rmnum{3}, we present the solution approach. We provide simulation results in Section \Rmnum{4}, and concluding remarks \eat{along with a discussion for a future research direction }in Section \Rmnum{5}.

\section{Problem Formulation} 
We consider a signal-free intersection, which includes a \textit{coordinator} that stores information about the intersection's geometry and CAVs' trajectories (Fig. \ref{fig:intersection}). The coordinator does not make any decision, and it only acts as a \textit{\mbox{{database}}} for the CAVs. The intersection includes a \textit{{control zone}} inside of which the CAVs can communicate with the coordinator. We call the points inside the control zone where paths of CAVs intersect and a lateral collision may occur \textit{conflict points}. Let $\mathcal{L}\subset \mathbb{N}$ indexes the set of conflict points, $N(t)\in\mathbb{N}$ be the total number of CAVs inside the control zone at time $t\in\mathbb{R}_{\geq0}$, and $\mathcal{N}(t)=\{1,\ldots,N(t)\}$ be the queue that designates the order in which each CAV entered the control zone. We model the dynamics of each CAV $i\in\mathcal{N}(t)$ as a double integrator
\begin{align}
\begin{aligned}\label{eq:dynamics}
\dot{p}_i(t)=v_i(t),\\
\dot{v}_i(t)=u_i(t),
\end{aligned}\end{align}
where $p_{i}(t)\in\mathcal{P}_{i}$, $v_{i}(t)\in\mathcal{V}_{i}$, and
$u_{i}(t)\in\mathcal{U}_{i}$ denote position, speed, and control input at $t$, respectively. 
The sets $\mathcal{P}_{i}$,
$\mathcal{V}_{i}$, and $\mathcal{U}_{i}$, $i\in\mathcal{N}(t),$
are compact subsets of $\mathbb{R}$. 
Let $\mathbf{x}_{i}(t)=[p_{i}(t)~ v_{i}(t)]^\top$ be the state of the CAV $i$ at time $t$, $t_{i}^{0}\in\mathbb{R}_{\geq 0}$ be the time that CAV $i\in\mathcal{N}(t)$ enters the control zone, and $t_{i}^{f}>t_i^0\in\mathbb{R}_{\geq 0}$ be the time that CAV $i$ exits the control zone. 
The state of CAV $i$, upon entering the control zone, is given by $\mathbf{x}_{i}(t_i^0)=[p_{i}^0~ v_{i}^0]^\top$, where $p_i^0 = p_i(t_i^0)=0$ and $v_i^0 = v_i(t_i^0)$. Similarly, the final state is denoted by $\mathbf{x}_{i}(t_i^f)=[p_{i}^f~ v_{i}^f]^\top$, where $p_i^f = p_i(t_i^f)$ and $v_i^f= v_i(t_i^f)$. \eat{It is worth mentioning that $p_i^f$ has a known value depending on the geometry of the intersection and path of CAV, i.e., (1) $p_i^f = 2L + D$, if CAV crosses the merging zone without turn; (2) $p_i^f = 2L + \frac{\pi R_r}{2}$, if CAV makes a right turn at the merging zone; and (3) $p_i^f = 2L + \frac{\pi R_l}{2}$, if CAV makes a left turn at the merging zone, where $R_r\in\mathbb{R}_{>0}$ and $ R_l\in\mathbb{R}_{>0}$ denote radius for the right turn and left turn, respectively.}
For each CAV $i\in\mathcal{N}(t)$ the control input and speed are bounded by 
\begin{align}
    u_{i,\min}&\leq u_i(t)\leq u_{i,\max}, \label{eq:uconstraint} \\
    0< v_{\min}&\leq v_i(t)\leq v_{\max} \label{eq:vconstraint},
\end{align}
where $u_{i,\min},u_{i,\max}$ are the minimum and maximum control inputs and $v_{\min},v_{\max}$ are the minimum and maximum speed limit, respectively. 

\begin{figure}[t]
    \centering
\includegraphics[width=0.95\linewidth]{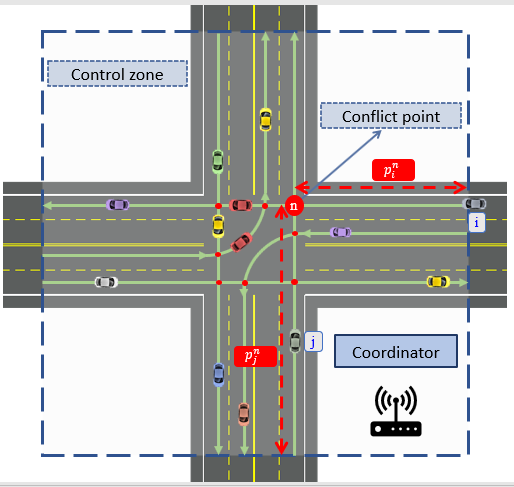}  \caption{A signal free intersection with conflict points.}
    \label{fig:intersection}
\end{figure}

To guarantee rear-end safety between CAV $i\in\mathcal{N}(t)$ and the preceding CAV $k\in\mathcal{N}(t)\setminus\{i\}$, we have 
\begin{gather}\label{eq:rearend}
 p_k(t)-p_i(t)\geq \delta_i(t)= \gamma + \varphi\cdot v_i(t),
\end{gather}
where $\delta_i(t)$ is the safe speed-dependent distance, while $\gamma$ and $\varphi\in\mathbb{R}_{>0}$ are the standstill distance and reaction time, respectively. 
\begin{definition}\label{def:timetraj}
For CAV $i\in\mathcal{N}(t)$, the inverse function, $p_i^{-1}$ is the time trajectory and denoted by $t_i\colon \mathcal{P}_{i} \rightarrow [t_i^0, t_i^f]$. The time trajectory yields the time that CAV $i$ is at position $p_i\in\mathcal{P}_i$ inside the control zone.
\end{definition}
Since $ 0 < v_{\min} \leq v_i(t)$, the position $p_i(t)$ is a strictly increasing function. Moreover, for every element in $\mathcal{P}_i$, there is at least one element in $[t_i^0, t_i^f]$, which implies that the position $p_i(t)$ is a surjective function \cite{Malikopoulos2020}, and hence the inverse $t_i\left(\cdot\right) = p_i^{-1}\left(\cdot\right)$ exists. 
However, for the cases where $v_{\min}=0$, for any $p\in\mathcal{P}_i$, we can use $t_i (p) =p_i^{-1}(p)=\{\max (\tau)~\vert~p_i(\tau)=p~,~ \tau\subset{[t_i^0, t_i^f]}\}$, which is the maximum preimage of $p$.

Let CAV $j\in\mathcal{N}(t) \setminus \{i\}$ be a CAV that is inside the control zone and has already planned its trajectory. CAV $i$ may have a lateral collision with CAV $j$ at conflict point $n\in\mathcal{L}$ (Fig. \ref{fig:intersection}). 
We denote by $p_{i}^n$ and $p_{j}^n$ the distance of the conflict point $n$ from $i$'s and $j$'s paths' entries, respectively (Fig. \ref{fig:intersection}).
To guarantee lateral collision avoidance, we impose the following time headway constraint
\begin{align} \label{eq:lateralSafety}
    &\big|t_i(p_i^n) - t_j(p_j^n)\big| \geq t_h,
\end{align}
where $t_h\in\mathbb{R}_{>0}$ is the minimum time headway between any two CAVs crossing conflict point $n$.

\eat{Next, we present deterministic decentralized optimal control problems for each CAV $i\in\mathcal{N}(t)$ over the interval $t\in[t_i^0, t_i^f]$ \cite{Malikopoulos2020}.}

\subsection{Deterministic Coordination Problem}
We start our exposition by briefly reviewing the single-level optimization framework for coordination of CAV developed in \cite{Malikopoulos2020}. 
In our framework, upon entrance, each CAV $i\in\mathcal{N}(t)$ communicates with the coordinator to access the time trajectories of CAVs which are already in the control zone. After obtaining this information, CAV $i$ solves a time minimization problem to determine the time that it must exit the control zone, $t_i^f$. The time $t_i^f$ corresponds to the unconstrained energy optimal trajectory guaranteeing that state, control, and safety constraints are satisfied. This trajectory is communicated back to the coordinator, so that the subsequent CAVs receive this information and plan their trajectories accordingly. 
Our framework implies that the CAVs do not have to come to a full stop at the intersection, thereby conserving momentum and fuel while also improving travel time. 

This approach allows us to handle the trade-off between minimizing energy consumption and travel time by having an energy-optimal trajectory combined with the minimum travel time. 
Furthermore, by enforcing the unconstrained energy optimal trajectory that guarantees the satisfaction of all the state, control, and safety constraints, we avoid inherent real-time implementation difficulties in solving a constrained optimal control problem. To solve a constrained optimal control problem, one can employ the standard methodology of Hamiltonian analysis with interior point state and/or control constraints \cite{bryson1975applied}. Namely, we first start with the unconstrained solution to the optimal control problem. If the solution violates any of state or control constraints, then the unconstrained arc is pieced together with the arc corresponding to the violated constraint at unknown time $\tau_1$, and we re-solve the problem with the two arcs pieced together. The two arcs yield a set of algebraic equations which are solved simultaneously using the boundary and interior conditions at $\tau_1$. If the resulting solution violates another constraint, then the last two arcs are pieced together with the arc corresponding to the new violated constraint, and we re-solve the problem with the three arcs pieced together at unknown times $\tau_1$ and $\tau_2$. The three arcs will yield a new set of algebraic equations that need to be solved simultaneously using the boundary and interior conditions at $\tau_1$ and $\tau_2$. The process is repeated until the solution does not violate any other constraints. Since this problem leads to a system of non-linear equations, in some cases, this iterative process can be computationally expensive.

Our exposition for the single-level optimization framework starts with the unconstrained energy optimal solution of CAV $i$ which has the following form \cite{Malikopoulos2020}  
\begin{alignat}{3}
p_{i}(t) &= \phi_{i,3} \cdot t^3 +\phi_{i,2} \cdot t^2 + \phi_{i,1} \cdot t +\phi_{i,0},&& \label{eq:nominal_p}\\
	v_{i}(t) & = 3\phi_{i,3} \cdot t^2 +2\phi_{i,2} \cdot t + \phi_{i,1}, && \label{eq:nominal_v} \\
	u_{i}(t) & = 6 \phi_{i,3} \cdot t + 2 \phi_{i,2}, && \label{eq:nominal_u}
\end{alignat}
where $\phi_{i,3}\neq 0$ and $\phi_{i,2}, \phi_{i,1}, \phi_{i,0}\in\mathbb{R}$ are the constants of integration. CAV $i$ must also satisfy the boundary conditions
\begin{align}
     p_i(t_i^0) &= 0,\quad  v_i(t_i^0)= v_i^0 , \label{eq:bci}\\
     p_i(t_i^f)&=p_i^f,\quad u_i(t_i^f)=0, \label{eq:bcf}
\end{align}
where $u_i(t_i^f)=0$ because the speed at the exit of the control zone is not specified \cite{bryson1975applied}.

Using the Cardano's method \cite{cardano2007rules}, the time trajectory $t_i(p_i)$ in Definition \ref{def:timetraj} is given by
\begin{multline}\label{eq:nominal_time}
	t_i(p_i)= \\
 \sqrt[3]{ - \frac{1}{2} \left(\omega_{i,1} + \omega_{i,2}~ p_i \right) + \sqrt{\frac{1}{4} \left(\omega_{i,1} + \omega_{i,2}~ p_i \right) ^ 2 + \frac{1}{27}\omega_{i,0} ^ 3}} +\\
\sqrt[3]{ - \frac{1}{2} \left(\omega_{i,1} + \omega_{i,2}~ p_i \right) - \sqrt{\frac{1}{4} \left(\omega_{i,1} + \omega_{i,2}~ p_i \right) ^ 2 + \frac{1}{27}\omega_{i,0} ^ 3}}\\
+ \omega_{i,3},\quad \quad p_i \in \mathcal{P}_i,
	\end{multline}
	\eat{
	\begin{align}
	\omega_{i,0} &= \frac{\phi_{i,1}}{\phi_{i,3}} - \frac{1}{3}\left(\frac{\phi_{i,2}}{\phi_{i,3}}\right)^2, \label{eqn:omega0}\\
	\omega_{i,1} &= \frac{1}{27}\left[2\left(\frac{\phi_{i,2}}{\phi_{i,3}}\right)^3 - \frac{9 \phi_{i,2} \cdot \phi_{i,1}}{(\phi_{i,3}) ^ 2} \right] + \frac{\phi_{i,0}}{\phi_{i,3}} \label{eqn:omega1}, \\
	\omega_{i,2} &= - \frac{1}{\phi_{i,3}}, \quad\quad
	\omega_{i,3} = - \frac{\phi_{i,2}}{3 \phi_{i,3}}.\label{eqn:omeg2,a3}
\end{align}}
where $\omega_{i,3}, \omega_{i,2}, \omega_{i,1}$, and $\omega_{i,0}\in\mathbb{R}$ such that $\frac{1}{4}(\omega_{i,1} + \omega_{i,2}~ p_i) ^ 2 + \frac{1}{27}\omega_{i,0} ^ 3 > 0$, and they are all defined in terms of $\phi_{i,3}$, $ \phi_{i,2}$, $ \phi_{i,1}$, $ \phi_{i,0}\in\Phi_i$, $\Phi_i\subset\mathbb{R}$, with $\phi_{i,3} \neq 0$.  The algebraic derivation of \eqref{eq:nominal_time} is tedious but standard \cite{Malikopoulos2020}, and thus omitted. 

Next, we formally define the single-level optimization framework aimed at minimizing the exit time from the control zone. 
\begin{problem}\label{prb:mintfProblem}
Each CAV $i\in\mathcal{N}(t)$ solves the following optimization problem at $t_i^0$, upon entering the control zone 
\end{problem}

\begin{align}\label{eq:tif}
    &\min_{t_i^f\in \mathcal{T}_i(t_i^0)} t_i^f \\
    \text{subject to: }& \notag\\
    & \eqref{eq:rearend}, \eqref{eq:lateralSafety},\eqref{eq:nominal_p},\eqref{eq:nominal_v}, \notag \\
    &\eqref{eq:nominal_u}, \eqref{eq:bci}, \eqref{eq:bcf},\eqref{eq:nominal_time},  \notag
\end{align}
\textit{where the compact set $\mathcal{T}_i(t_i^0)$ is the set of feasible solution of CAV $i\in\mathcal{N}(t)$ for the exit time computed at $t_i^0$ using the speed and control input constraints \eqref{eq:uconstraint}-\eqref{eq:vconstraint}, initial condition \eqref{eq:bci}, and final condition \eqref{eq:bcf}. The derivation of this compact set is discussed in \cite{chalaki2020experimental}. }
\begin{remark}
The solution of Problem \ref{prb:mintfProblem} yields a $t_i^f$ which guarantees that none of the constraints in Problem \ref{prb:mintfProblem} becomes active, and thus CAV $i$ follows the unconstrained energy-optimal solution \eqref{eq:nominal_p}-\eqref{eq:nominal_u}.
\end{remark}

\subsection{Uncertainty in the Coordination Problem}
In an earlier work \cite{chalaki2020experimental}, we showed that there is a discrepancy between the actual and planned trajectories due to the presence of uncertainty originated from error in low-level tracking, noisy measurements, etc.
In this paper, to accommodate this uncertainty, we reformulate Problem \ref{prb:mintfProblem} as a robust coordination problem, the solution of which guarantees that constraints in the system are satisfied in the presence of bounded deviations from the nominal trajectories.
\eat{This predetermined probability can be used to tune the level of the conservativeness of the resulting solution to the underlying problem.We proceed with introducing the following definitions to formally state the problem.}
\begin{assumption}
The deviation from the deterministic nominal time trajectory of a real-physical CAV $i\in\mathcal{N}(t)$ is given by an unknown function $e_i:\mathcal{P}_i \rightarrow\mathcal{E}_i$, where $\mathcal{E}_i$ is an unknown bounded subset of $\mathbb{R}$. 
\end{assumption}
We consider that $e_i(p_i)$ can be approximated by a Gaussian process defined on a probability space $(\mathcal{P}_i,\mathscr{P}_i,\mathbb{P})$, where $\mathscr{P}_i$ is the associated $\sigma$-algebra and $\mathbb{P}$ is a probability measure on $(\mathcal{P}_i,\mathscr{P}_i)$. This is a reasonable approach, since GP regression has been used successfully to approximate functions in many applications \cite{rasmussen2003gaussian}.

\begin{definition}
The actual time trajectory for CAV $i\in\mathcal{N}(t)$ is a random process defined on $(\mathcal{P}_i,\mathscr{P}_i,\mathbb{P})$, denoted by $\hat{t}_{i}:\mathcal{P}_i \rightarrow\mathbb{R}$, and given by
\begin{equation}
    \hat{t}_{i}(p_i) = t_i(p_i)+ e_i(p_i),\label{eq:actual_t}
\end{equation}
where $t_i(p_i)$ is the nominal trajectory which is the solution of Problem \ref{prb:mintfProblem}. \end{definition}

From Definition \ref{def:timetraj}, the time trajectory is the inverse function of the position trajectory. Having a deviation in the time trajectory also makes the deviation in the position trajectory inevitable.  

\begin{definition}\label{def:actPosTraj}
The actual position trajectory of CAV $i\in\mathcal{N}(t)$ is a random process denoted by $\hat{p}_i$ defined on a probability space $(\Omega_i,\mathscr{F}_i,\mathbb{P})$, $\Omega_i\in\mathbb{R}$, and given by
\begin{equation}
    \hat{p}_{i}(t)= p_i(t)+ f_i(t),  \label{eq:actual_p}\\
\end{equation}
where $f_i(t)$ is the unknown deviation from the nominal position trajectory, and it is defined on $(\Omega_i,\mathscr{F}_i,\mathbb{P})$.
\end{definition}

\begin{lemma}\label{lem:actualPositionTraj}
The deviation in the position trajectory of CAV $i\in\mathcal{N}(t)$, $f_i(t)$, can be derived from the deviation $e_i(t)$ of its time trajectory.

\end{lemma}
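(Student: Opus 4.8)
The plan is to exploit the structural fact that, just as the nominal time trajectory $t_i$ is by Definition \ref{def:timetraj} the inverse of the nominal position trajectory $p_i$, the \emph{actual} time trajectory $\hat{t}_i$ must be the inverse of the \emph{actual} position trajectory $\hat{p}_i$. Indeed, $\hat{t}_i(p_i)$ is by construction the time at which the realized motion of CAV $i$ passes through position $p_i$, so that $\hat{p}_i\bigl(\hat{t}_i(p_i)\bigr) = p_i$ for every $p_i\in\mathcal{P}_i$. This single inverse identity is what couples the two deviations $e_i$ and $f_i$, and the proof essentially amounts to unfolding it.

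First I would substitute the two representations \eqref{eq:actual_t} and \eqref{eq:actual_p} into this identity. Writing $\tau = \hat{t}_i(p_i) = t_i(p_i) + e_i(p_i)$ and using $\hat{p}_i(\tau) = p_i(\tau) + f_i(\tau) = p_i$ gives
\[
f_i\bigl(t_i(p_i)+e_i(p_i)\bigr) = p_i - p_i\bigl(t_i(p_i)+e_i(p_i)\bigr).
\]
Because $t_i = p_i^{-1}$ we have $p_i = p_i\bigl(t_i(p_i)\bigr)$, so the right-hand side becomes $p_i\bigl(t_i(p_i)\bigr) - p_i\bigl(t_i(p_i)+e_i(p_i)\bigr)$, an expression depending only on the nominal position map $p_i$ and the time deviation $e_i$. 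This already constitutes an exact, albeit implicit, formula for $f_i$ in terms of $e_i$. To obtain $f_i$ as a function of the clock time $t$ rather than of the shifted argument, I would set $p_i = \hat{p}_i(t)$, equivalently invert $\tau \mapsto p_i$, which closes the relation.

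To put the result in a usable closed form, I would linearize: since $e_i$ is a small bounded deviation, a first-order Taylor expansion of $p_i$ about $t_i(p_i)$, together with $\dot p_i = v_i$ from \eqref{eq:dynamics}, yields
\[
f_i(t) \approx -\,v_i(t)\, e_i\bigl(p_i(t)\bigr),
\]
so the position deviation is, to leading order, the negative of the speed times the time deviation evaluated at the corresponding position; the sign is exactly as expected, since arriving late at $p_i$ (that is, $e_i>0$) means lagging behind the nominal position at the nominal time. The main obstacle I anticipate is the well-definedness of the inverse itself: for $\hat{t}_i$ to be the inverse of $\hat{p}_i$ one needs $\hat{p}_i$ to remain strictly increasing, i.e. the perturbation must satisfy $\dot p_i(t) + \dot f_i(t) > 0$, an analogue of the $v_{\min}>0$ condition used for the nominal map in Definition \ref{def:timetraj}. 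Granting this monotonicity, the change of variables $p_i = \hat{p}_i(t)$ is legitimate and the exact relation above holds; the remaining work is purely the bookkeeping of the composition and, if a non-asymptotic statement is desired, controlling the Taylor remainder via the boundedness of $e_i$ and of $v_i$ on the compact sets $\mathcal{V}_i$ and $\mathcal{P}_i$.
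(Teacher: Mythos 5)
Your proof follows essentially the same route as the paper's: both hinge on the inverse identity $\hat{p}_i\bigl(\hat{t}_i(p_i)\bigr)=p_i$, substitute \eqref{eq:actual_t} and \eqref{eq:actual_p} into it, and arrive at the exact relation $f_i\bigl(\hat{t}_i(p_i)\bigr)=p_i\bigl(t_i(p_i)\bigr)-p_i\bigl(t_i(p_i)+e_i(p_i)\bigr)$, which suffices for the lemma as stated, so your proof is correct. The one genuine divergence is the final step: where you call the relation ``implicit'' and resort to a first-order Taylor expansion $f_i(t)\approx -v_i(t)\,e_i\bigl(p_i(t)\bigr)$, the paper instead exploits the fact that the nominal position trajectory \eqref{eq:nominal_p} is a cubic polynomial, so the difference $p_i(t_i)-p_i(t_i+e_i)$ expands \emph{exactly} into the finite cubic-in-$e_i$ expression \eqref{eq:pos Eval hat t Step2}, with no remainder term to control. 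This exactness is not cosmetic: it is precisely what allows the subsequent results to conclude that $f_i$ follows a cubic normal distribution and to compute its posterior mean and variance exactly via moment-generating functions (and analogously for $g_i$ in Theorem \ref{thm:deviation on speed} and Proposition \ref{prop:deviation in pos}); your linearization, if carried downstream, would replace those exact moments with leading-order approximations and lose the Chi-square/cubic-normal structure. On the credit side, your monotonicity caveat --- that $\hat{t}_i=\hat{p}_i^{-1}$ requires $\hat{p}_i$ to remain strictly increasing, an analogue of the $v_{\min}>0$ condition in Definition \ref{def:timetraj} --- is a legitimate point that the paper glosses over with a terse appeal to $(p_i\circ p_i^{-1})(x)=x$, and making it explicit strengthens the argument.
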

\begin{proof}
Let $p_i\in\mathcal{P}_i$ be an arbitrary known position, with $p_i = p_i(t)$. By evaluating \eqref{eq:actual_t} at $p_i$, we obtain the actual time $\hat{t}_i(p_i)$ that CAV $i\in\mathcal{N}(t)$ is at position $p_i$. Evaluating \eqref{eq:actual_p} at the actual time, we obtain  

\begin{equation}\label{eq:actual pos Eval hat t}
     \hat{p}_{i}(\hat{t}_i(p_i)) = p_i(\hat{t}_i(p_i)) + f_i(\hat{t}_i(p_i)).     
\end{equation}
By definition of inverse function $(p_i \circ p_i^{-1})(x) = x$, and thus LHS in \eqref{eq:actual pos Eval hat t} equals to $p_i$. Substituting \eqref{eq:actual_t} in the first term of RHS, we have 
\begin{multline}\label{eq:pos Eval hat t}
     p_i = \phi_{i,3} \cdot \left(t_i(p_i)+ e_i(p_i)\right)^3 +\phi_{i,2} \cdot\left(t_i(p_i)+ e_i(p_i)\right)^2\\ + \phi_{i,1} \cdot (t_i(p_i)+ e_i(p_i)) +\phi_{i,0} + f_i(\hat{t}_i(p_i)).\quad    
\end{multline}
Next, by expanding \eqref{eq:pos Eval hat t}, we get 
\begin{align}\label{eq:pos Eval hat t Step2}
     f_i(\hat{t}_i(p_i)) &=- [\phi_{i,3}\cdot e_i(p_i)^3 + 3\phi_{i,3}\cdot e_i(p_i)^2\cdot t_i(p_i) \nonumber\\ &+\phi_{i,2}\cdot e_i(p_i)^2 +3\phi_{i,3}\cdot e_i(p_i)\cdot t_i(p_i)^2 \nonumber\\
     &+2\phi_{i,2}\cdot e_i(p_i)\cdot t_i(p_i)+\phi_{i,1} \cdot e_i(p_i)].
\end{align}
Since $p_i\in\mathcal{P}_i$ is an arbitrary known position, the above equation holds for any $p_i$, and the proof is complete.
\end{proof}

\eat{\begin{definition}\label{def:actVelTraj}
The actual speed trajectory of CAV $i\in\mathcal{N}(t)$ is a random process denoted by $\hat{v}_i(t)$ defined on probability space $(\Omega_i,\mathscr{F}_i,\mathbb{P})$.
\end{definition}}

\begin{corollary}
The actual speed trajectory of CAV $i\in\mathcal{N}(t)$,  $\hat{v}_i(t)$ is a random process defined on probability space $(\Omega_i,\mathscr{F}_i,\mathbb{P})$, and it is found from
\begin{equation}
   \hat{v}_{i}(t) = v_i(t)+ g_i(t), \label{eq:actual_v}\\
\end{equation}
where $g_i(t)$ is unknown deviation from nominal speed trajectory, and is defined on  $(\Omega_i,\mathscr{F}_i,\mathbb{P})$.
\end{corollary}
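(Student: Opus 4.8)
The plan is to obtain the claim as a direct consequence of the double-integrator dynamics \eqref{eq:dynamics}, together with the decomposition of the actual position trajectory established in Definition \ref{def:actPosTraj} and Lemma \ref{lem:actualPositionTraj}. First I would recall that, by \eqref{eq:dynamics}, the speed of CAV $i$ is the time derivative of its position, so the actual speed trajectory is simply $\hat{v}_i(t) = \tfrac{d}{dt}\hat{p}_i(t)$. Substituting the decomposition $\hat{p}_i(t) = p_i(t) + f_i(t)$ from \eqref{eq:actual_p} and differentiating term by term yields
\begin{equation*}
\hat{v}_i(t) = \frac{d}{dt}\big(p_i(t) + f_i(t)\big) = \dot{p}_i(t) + \dot{f}_i(t) = v_i(t) + \dot{f}_i(t),
\end{equation*}
where the last equality again uses \eqref{eq:dynamics}. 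Defining $g_i(t) := \dot{f}_i(t)$ then gives exactly \eqref{eq:actual_v}, identifying $g_i$ as the deviation from the nominal speed trajectory $v_i$.

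The second step is to verify that $g_i$ is a well-defined random process on the same probability space $(\Omega_i,\mathscr{F}_i,\mathbb{P})$ on which $f_i$ is defined per Definition \ref{def:actPosTraj}. Since $g_i$ is obtained from $f_i$ by the pathwise differentiation operation, it inherits measurability with respect to $\mathscr{F}_i$ and introduces no new probability structure; hence it is a random process on $(\Omega_i,\mathscr{F}_i,\mathbb{P})$, as claimed.

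The main obstacle I anticipate is justifying the differentiability of the random process $f_i$, so that $\dot{f}_i$ exists. Here I would invoke Lemma \ref{lem:actualPositionTraj}, which expresses $f_i$ as a polynomial combination of the nominal time trajectory $t_i$ and the time deviation $e_i$ with the constant coefficients $\phi_{i,\cdot}$. Because the nominal trajectory is the smooth cubic \eqref{eq:nominal_p} and $e_i$ is modeled as a Gaussian process, differentiability of $f_i$ reduces to the (mean-square or sample-path) differentiability of $e_i$, which holds whenever the GP is endowed with a sufficiently smooth covariance kernel; this is a standard and mild regularity assumption. A further subtlety is that $e_i$ lives in the position domain while $f_i$ is a function of time, so the differentiation implicitly passes through the change of variables $p_i = p_i(t)$; invoking the chain rule on the expression from Lemma \ref{lem:actualPositionTraj} (and using that $p_i$ is strictly increasing, hence invertible) resolves this and confirms that $\dot{f}_i$ exists as a random process on $(\Omega_i,\mathscr{F}_i,\mathbb{P})$, completing the argument.
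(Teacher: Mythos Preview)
Your proposal is correct and follows exactly the paper's approach: the paper's proof is the single line ``By taking time derivative of \eqref{eq:actual_p}, the result follows immediately,'' which is precisely your first step of differentiating $\hat{p}_i(t)=p_i(t)+f_i(t)$ and setting $g_i(t)=\dot f_i(t)$. Your additional remarks on measurability and differentiability of $f_i$ go beyond what the paper provides but are consistent with it (in particular, the paper later adopts the Mat\'ern $3/2$ kernel, which is once mean-square differentiable, so your regularity hedge is apt).
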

\begin{proof}
By taking time derivative of \eqref{eq:actual_p}, the result follows immediately.
\end{proof}
\subsection{Robust Coordination Problem}

For each CAV $i \in\mathcal{N}(t)$, we formulate a robust coordination problem in the presence of uncertainty. We seek to derive the new minimum time $t_i^{f}$ for CAV $i$ to exit the control zone. This exit time corresponds to a new unconstrained energy-optimal trajectory that satisfies all the state, control, and safety constraints for all realizations of uncertainty. 
In what follows, let $E_i(\cdot)\subset\mathcal{E}_i$,  $F_i(\cdot)\subset\mathcal{P}_i$, $G_i(\cdot)\subset\mathcal{V}_i$ denote the bounded confidence intervals of CAV $i$ for random process $e_i(\cdot)$, $f_i(\cdot)$, and $g_i(\cdot)$, respectively.

We enhance \eqref{eq:lateralSafety} as follows  
\begin{align}\label{eq:ChanceLateralSafety}
   &\big|\hat{t}_{i}(p_i^n) - \hat{t}_{j}(p_j^n)\big| \geq t_h, \\ 
   &\forall e_i(p_i^n)\in E_i(p_i^n), \forall e_j(p_j^n)\in E_j(p_j^n), \notag
\end{align}
to include the CAVs' deviations from their nominal time trajectories. 
This constraint should be satisfied for every possible realizations of deviation from the nominal time trajectory of CAV  $i\in\mathcal{N}(t)$ and CAV $j\in\mathcal{N}(t) \setminus \{i\}$. Note that for CAVs $i$ and $j$, $p_i^n$, $p_j^n$ are constant and they depend only on the intersection's geometry (Fig. \ref{fig:intersection}).

Similarly, we enhance rear-end safety constraint \eqref{eq:rearend} defined on the nominal trajectories by incorporating the deviations from nominal position trajectories \eqref{eq:actual_p} as follows
\begin{align} \label{eq:ChanceRearendSafety}
  &\hat{p}_k(t)-\hat{p}_i(t) \geq \hat{\delta}_i(t)=\gamma + \varphi\cdot \hat{v}_i(t),\\ 
   &\forall f_i(t)\in F_i(t),~ \forall f_k(t)\in F_k(t),~\forall g_i(t)\in G_i(t), \notag
\end{align}
where the distance between CAV $i\in\mathcal{N}(t)$ and the preceding CAV $k\in\mathcal{N}(t)\setminus\{i\}$ has to be greater than a safe distance $\hat{\delta}_i(t)$ for every realizations of deviations from the nominal trajectories of CAV $i$ and CAV $k$ (Fig. \ref{fig:intersection}). Finally, to account for the deviation of the speed of CAV $i$, we enhance the constraint 
 \begin{align}\label{eq:ChanceSpeedLimit}
        &v_{\min}\leq \hat{v}_i(t)\leq v_{\max},\\
        &\forall g_i(t) \in G_i(t)\notag, 
 \end{align}
 for every realizations of deviation from the nominal speed trajectory of CAV $i$.

\begin{problem} \label{prb:mintfProblemProbab} For each CAV $i\in\mathcal{N}(t)$, we consider the following robust coordination problem
\end{problem}
\begin{align}\label{eq:decentral}
    &\min_{t_i^f\in \mathcal{T}_i(t_i^0)} t_i^f \\
    \text{subject to: }& \eqref{eq:nominal_p},\eqref{eq:nominal_v}, \eqref{eq:nominal_u}, \eqref{eq:nominal_time}, \eqref{eq:actual_t}, \eqref{eq:actual_p}, \eqref{eq:actual_v},  \eqref{eq:ChanceLateralSafety},\eqref{eq:ChanceRearendSafety},\eqref{eq:ChanceSpeedLimit},\notag\\
    \text{and given} &\text{ boundary conditions.}\nonumber
\end{align}

\begin{remark}\label{rmk:notation}
In what follows, to simplify notation, for CAV $i\in\mathcal{N}(t)$, we denote the original nominal trajectories (resulting from the solution of Problem \ref{prb:mintfProblem}) with bar, e.g., $\Bar{t}_i(p_i),\Bar{p}_i(t),\Bar{v}_i(t)$, and $\Bar{u}_i(t)$, and reserve $t_i(p_i),p_i(t),v_i(t)$, and $u_i(t)$ for the new nominal trajectories (resulting from the solution of Problem \ref{prb:mintfProblemProbab}). 
\end{remark}

\section{Solution approach}

In our framework, upon entering the control zone, CAV $i\in\mathcal{N}(t)$ does not have any information about its uncertainty, and thus we have $E_i(\cdot)=F_i(\cdot)=G_i(\cdot) = \emptyset$. First, CAV $i$ communicates with the coordinator and obtain trajectories and information about the uncertainty of CAVs which are already in the control zone. Using this information, CAV $i$ computes its nominal trajectories by solving Problem \ref{prb:mintfProblemProbab}. 
As CAV $i$ travels following these nominal trajectories, it makes measurements (possibly noisy) of the actual time that it reaches to different positions $p_i\in\mathcal{P}_i$, denoted by $\Tilde{t}_i(p_i)\in\mathbb{R}_{\geq 0}$, and given by $\Tilde{t}_i(p_i) = \hat{t}_{i}(p_i) + \xi _i $, where $\xi _i \sim \mathcal{N}(0,\,\sigma_n^{2})$ is a Gaussian noise with unknown variance $\sigma_n^{2}$.  Next, we define the error in the time trajectory based on these measurements.     

 \begin{definition}
  The difference between the noisy measurements of the time trajectory $\Tilde{t}_i(p_i)$ and the nominal time trajectory $t_i(p_i)$ is denoted by $\Tilde{e}_i(p_i)= \Tilde{t}_i(p_i)-t_i(p_i) $, which is a random process on a probability space $(\mathcal{P}_i,\mathscr{P}_i,\mathbb{P})$. 
 \end{definition}
  \begin{definition}
 The set of observation samples of CAV $i\in\mathcal{N}(t)$, is denoted by $\mathcal{O}_i$, i.e., 
 \begin{align}
     \mathcal{O}_i = (\mathbf{p}_i,\Tilde{\mathbf{e}}_i) = \left\{\left(p_i^{(j)} , \Tilde{e}_i(p_i^{(j)})\right)\mid j=1,\dots,n\right\},
 \end{align}
 where $\mathbf{p}_i$, $\Tilde{\mathbf{e}}_i$ represent the $n$ dimensional vector of positions and corresponding observed errors in time trajectory, respectively, and the $j^{\text{th}}$ sample is denoted by $\left(p_i^{(j)} , \Tilde{e}_i(p_i^{(j)})\right)$. 
 \end{definition}

To this end, we consider that CAV $i\in\mathcal{N}(t)$ makes the noisy observations $\mathcal{O}_i$ before reaching a certain position in the control zone called \textit{uncertainty characterization point} denoted by $p^{z}\in\mathcal{P}_i$. Let $t_i^{z}$ be the actual time that CAV $i$ reaches this point. 
After CAV $i$ reaches $p^z$, it characterizes its uncertainty in the time, position, and speed trajectories based on the observed information, $\mathcal{O}_i$. Then, CAV $i$ communicates with the coordinator to access the trajectories of other CAVs $j<i \in\mathcal{N}(t)$ which entered the control zone earlier than CAV $i$. CAV $i$ also obtains information about their deviation from their trajectories, if they have already characterized their uncertainty. After obtaining this information, CAV $i$ solves the robust coordination problem (Problem \ref{prb:mintfProblemProbab}), with the revised initial conditions $p_{i}(t_i^{z}) = p^z$ and 
 $v_{i}(t_i^z) =\Bar{v}_i(t_i^{z}) + g(t_i^{z})$  (recall that based on Remark \ref{rmk:notation}, $\Bar{v}_i(t_i^z)$ is the speed of CAV $i$ at $t_i^{z}$ computed from the original nominal trajectory). CAV $i$ communicates back these new nominal trajectories along with its characterization of uncertainty to the coordinator. Then, the coordinator broadcasts a \textit{replanning event} for all CAVs $j>i \in\mathcal{N}(t)$ which entered the control zone after CAV $i$ to re-plan their trajectory with this new information. These CAVs then sequentially resolve their optimization problem, and plan their trajectory accordingly. Algorithm \ref{Alg:robust coordination} shows a psueducode of this process.

 \begin{algorithm}
\small
 \caption{Robust Coordination Psuedocode}
 \begin{algorithmic}[1]
 \State{\textit{Replanning Event} $\gets$\texttt{False}}
\For{CAV $i \in\mathcal{N}(t)$}
\If{$t=t_i^0$}
\State{$E_i, F_i, G_i \gets \emptyset$}
\State{Solve Problem \ref{prb:mintfProblemProbab}}
\ElsIf{CAV $i$ reached at $p^z$}

\State{$E_i, F_i, G_i \gets$ Characterize uncertainty based on $\mathcal{O}_i$}
\State{Solve Problem \ref{prb:mintfProblemProbab}}
\State{\textit{Replanning Event} $\gets$\texttt{True}}
\ElsIf{\textit{Replanning Event}}
\State{Solve Problem \ref{prb:mintfProblemProbab}}
\EndIf
\EndFor
\end{algorithmic} \label{Alg:robust coordination}
\end{algorithm}

 Next, we present the process for characterizing uncertainty for CAV $i\in\mathcal{N}(t)$ based on the set of observation samples, $\mathcal{O}_i$. 
 
\subsection{Modeling Uncertainty in Time Trajectory as GP}
 Let CAV $i\in\mathcal{N}(t)$ make noisy observations of actual time trajectory at different positions $p_i \in \mathcal{P}_i$ before reaching the uncertainty characterization point $p^{z}$. These observations can be obtained by the CAV or an infrastructure at certain positions in the control zone.

 Given the observation samples, $\mathcal{O}_i$, we use GP regression to model $e_i(p_i) \sim\mathcal{GP}(m(p_i),k(p_i,p_i^\prime))$, where $m(p_i)$ and $k(p_i,p_i^\prime)$ represents the prior mean and covariance, respectively. We assume no prior knowledge on the error is available, and thus, we set the prior mean to zero, $m(p_i)=0$. For the covariance function, we adopt the\textit{ Mat\'ern $3/2$} model that is one time differentiable in the mean-square sense, and it is
 given by $k(p_i,p_i^{\prime}) ={\dfrac {\sigma_s ^{2}}{\Gamma (\frac{3}{2} )}}\left(\sqrt {3 }\frac {p_i-p_i^{\prime}}{\ell_s } \right)^{\frac{3}{2} }K_{\frac{3}{2} }\left(\sqrt {3}\frac {p_i-p_i^{\prime}}{\ell_s }\right)$, where $K_{\frac{3}{2} }$ and $\Gamma$ are modified Bessel and Gamma functions, respectively.
Let $\sigma_{s}^2$ and $\ell_s$ be the process variance, the covariance function's parameter, respectively \cite{rasmussen2003gaussian}. The hyperparameters are represented by $\theta = [\sigma_{s}^2~\sigma_{n}^2~ \ell_s]^\top$, where $\sigma_{n}^2$ is the unknown variance of observation noise. The hyperparameters can be learned by maximizing the log marginal likelihood of the observation samples, i.e., $\theta^\ast = \arg\max_{\theta} \log \mathbb{P}(\Tilde{\mathbf{e}}_i|\mathbf{p}_i,\theta)$.
Given the observation samples, $\mathcal{O}_i$, the marginalized GP posterior at any arbitrary point $p_i^*$ is a univariate normal distribution, denoted by $e_i(p_i^*)\sim \mathcal{N}(\mu_e(p_i^\ast), \sigma_e^2(p_i^*) )$ defined with the following mean and variance 
\begin{align}
    \mu_e(p_i^\ast) &= m(p_i^\ast)+ k^*\left(K+\sigma_n^2I\right)^{-1}\Tilde{\mathbf{e}}_i\label{eq:GPPosMean},\\
    \sigma_e^2(p_i^*) &= k(p_i^*,p_i^*) -k^{*^\top}\left(K+\sigma_n^2I\right)^{-1}k^{*},\label{eq:GPPosVar}
\end{align}
where $K = K(\mathbf{p}_i,\mathbf{p}_i)$ denotes the $n\times n$ matrix of the covariances evaluated at all pairs of training point $\mathbf{p}_i$, and  $k^* = K(\mathbf{p}_i,p_i^*)$ is the $n\times 1$ matrix of the covariances evaluated at $n$ training points and one arbitrary point, $p_i^*$. 

\subsection{Uncertainty in Speed and Position Trajectories}
\begin{theorem}\label{thm:deviation on speed}
Deviation from the nominal speed trajectory of CAV $i\in\mathcal{N}(t)$, $g_i(t)$, follows a Chi-square distribution with one degree of freedom, and its posterior mean and variance at actual time $\hat{t}_i(p_i)$, where CAV arrives at position $p_i$ can be derived from
\begin{align}
\mu_g(\hat{t}_i(p_i)) &= a^{\prime}_1 \,\mu_e +a^{\prime}_2 \,{\left(\mu_e^2 +\sigma_e^2 \right)}, \label{eq:mu_g}\\
    \sigma^2_g(\hat{t}_i(p_i)) &= \sigma_e^2 \,{\left({a^{\prime}_1 }^2 +4\mu_e(a^{\prime}_1 \,a^{\prime}_2 +{a^{\prime}_2 }^2 \,\mu_e) +2\,{a^{\prime}_2 }^2 \,\sigma_e^2 \right)}\label{eq:sigma_g},
\end{align}
where $a^{\prime}_1=-2\phi_{i,2}-6\phi_{i,3}\cdot t_i(p_i)$ and $a^{\prime}_2=-3\phi_{i,3}$, and $\mu_e = \mu_e(p_i)$, $\sigma_e= \sigma_e(p_i)$.
\end{theorem}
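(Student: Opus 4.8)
The plan is to reduce the statement to a claim about a quadratic transformation of a single Gaussian random variable and then read off the distribution and its first two moments. By the Corollary, the speed deviation is the time derivative of the position deviation \eqref{eq:actual_p}; differentiating the expression for $f_i$ established in Lemma \ref{lem:actualPositionTraj} (namely \eqref{eq:pos Eval hat t Step2}) with respect to the nominal time $t_i(p_i)$ while holding the realization $e_i(p_i)$ fixed makes the cubic term in $e_i(p_i)$ drop out and leaves
\[
    g_i\big(\hat{t}_i(p_i)\big) = a'_1\, e_i(p_i) + a'_2\, e_i(p_i)^2,
\]
with $a'_1 = -2\phi_{i,2} - 6\phi_{i,3}\,t_i(p_i)$ and $a'_2 = -3\phi_{i,3}$ collecting the surviving first- and second-order coefficients. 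Equivalently, one may expand the nominal speed polynomial \eqref{eq:nominal_v} at $\hat{t}_i(p_i) = t_i(p_i)+e_i(p_i)$ and subtract its value at $t_i(p_i)$. I expect this reduction to be the main obstacle, since it is the only place where the coefficients $a'_1$ and $a'_2$ must be matched exactly; once it is in hand, $e_i(p_i)\sim\mathcal{N}(\mu_e,\sigma_e^2)$ is the Gaussian supplied by the GP posterior \eqref{eq:GPPosMean}--\eqref{eq:GPPosVar}, and everything else is bookkeeping.

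Given this representation, the distributional claim follows by completing the square. Writing
\[
    g_i = a'_2\Big(e_i + \tfrac{a'_1}{2a'_2}\Big)^2 - \tfrac{(a'_1)^2}{4a'_2}
\]
exhibits $g_i$ as an affine function of the square of the single Gaussian variable $e_i + \tfrac{a'_1}{2a'_2}$. Because the square of a (shifted) normal variable is, after normalization by its variance, a (noncentral) chi-square random variable with one degree of freedom, $g_i$ inherits this law up to the scale factor $a'_2\sigma_e^2$ and the deterministic shift, which is exactly the one-degree-of-freedom claim.

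It remains to compute the first two moments directly from the quadratic form using the standard Gaussian moments $\E[e_i]=\mu_e$, $\E[e_i^2]=\mu_e^2+\sigma_e^2$, $\E[e_i^3]=\mu_e^3+3\mu_e\sigma_e^2$, and $\E[e_i^4]=\mu_e^4+6\mu_e^2\sigma_e^2+3\sigma_e^4$. Taking expectations term by term gives $\mu_g = a'_1\mu_e + a'_2(\mu_e^2+\sigma_e^2)$, which is \eqref{eq:mu_g}. For the variance I would use $\sigma_g^2 = {a'_1}^2\operatorname{Var}(e_i) + {a'_2}^2\operatorname{Var}(e_i^2) + 2a'_1 a'_2\operatorname{Cov}(e_i,e_i^2)$ together with $\operatorname{Var}(e_i)=\sigma_e^2$, $\operatorname{Var}(e_i^2)=4\mu_e^2\sigma_e^2+2\sigma_e^4$, and $\operatorname{Cov}(e_i,e_i^2)=2\mu_e\sigma_e^2$; collecting the common factor $\sigma_e^2$ then reproduces \eqref{eq:sigma_g}. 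These moment evaluations are routine, so the entire difficulty is concentrated in the reduction to $g_i = a'_1 e_i + a'_2 e_i^2$.
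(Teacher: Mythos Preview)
Your proposal is correct and follows essentially the same strategy as the paper: reduce $g_i(\hat t_i(p_i))$ to the quadratic $a'_1\,e_i + a'_2\,e_i^2$ in the Gaussian posterior $e_i(p_i)$, identify the resulting one--degree--of--freedom (noncentral) chi-square law, and then read off the first two moments. The only cosmetic differences are that the paper obtains the quadratic by repeating the expansion of Lemma~\ref{lem:actualPositionTraj} for the speed polynomial \eqref{eq:nominal_v} (your ``equivalently'' alternative) rather than by differentiating $f_i$, and it extracts the needed moments via the moment-generating function $M_e(\tau)=\exp(\tau\mu_e+\tfrac12\sigma_e^2\tau^2)$ instead of quoting the raw Gaussian moment formulas you list.
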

\begin{proof}
Let $p_i\in\mathcal{P}_i$ be an arbitrary known position, with $p_i=p_i(t)$ and $\hat{t}_i(p_i)$ be the actual time, where CAV arrives at position $p_i$. Following the same steps as Lemma \ref{lem:actualPositionTraj}, the deviation is given by $g_i(\hat{t}_i(p_i)) =- [(2\phi_{i,2}+6\phi_{i,3}\cdot t_i(p_i))\cdot e_i(p_i) + 3\phi_{i,3}\cdot e_i(p_i)^2]$, where $e_i(p_i)$ is a univariate normal variable, and thus $g_i(\hat{t}_i(p_i))$ follows a Chi-square distribution with one degree of freedom \cite{fleishman1978method}. To derive expectation and variance of $g_i(\hat{t}_i(p_i))$, we use moment-generating function and its properties. Since $e_i(p_i)$ is a normal random variable, its moment-generating function is given by $M_e(\tau)=\exp(\tau \mu_e + \dfrac{1}{2}\sigma_e^2\tau^2)$. The $n^\text{th}$ moment of random variable $e_i(p_i)$, denoted by $\E[e_i(p_i)^n]$, can be derived from $\dfrac{d^n}{d\tau^n} M_e(\tau)\mid_{\tau=0}$. From linearity of expectation, we have $\mu_g(\hat{t}_i(p_i))=\E[g_i(\hat{t}_i(p_i))]=a^{\prime}_1\E[e_i(p_i)]+ a^{\prime}_2 \E[e_i(p_i)^2]$, where the first and second moments of $e_i(p_i)$, are given by $\mu_e$ and $\mu_e^2+\sigma_e^2$, respectively. To find the variance, we use $\sigma^2_g(\hat{t}_i(p_i)) = \E[g_i(\hat{t}_i(p_i))^2] -\E[g_i(\hat{t}_i(p_i))]^2$, where we can employ the same procedure and derive \eqref{eq:sigma_g}.
\end{proof}

\begin{corollary}
The deviation $f_i(t)$ from the nominal position trajectory of CAV $i\in\mathcal{N}(t)$  follows a cubic normal distribution.
\end{corollary}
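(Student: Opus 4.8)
The plan is to combine the explicit algebraic identity for the position deviation established in Lemma \ref{lem:actualPositionTraj} with the Gaussianity of the time-trajectory error $e_i$ delivered by the GP posterior. First I would fix an arbitrary position $p_i\in\mathcal{P}_i$ and recall expression \eqref{eq:pos Eval hat t Step2}, which writes $f_i(\hat{t}_i(p_i))$ as a polynomial in $e_i(p_i)$. Collecting terms by powers of $e_i(p_i)$ lets me rewrite it compactly as
\begin{equation}\label{eq:cubicForm}
f_i(\hat{t}_i(p_i)) = c_3\, e_i(p_i)^3 + c_2\, e_i(p_i)^2 + c_1\, e_i(p_i),
\end{equation}
with $c_3 = -\phi_{i,3}$, $c_2 = -\left(3\phi_{i,3}\, t_i(p_i) + \phi_{i,2}\right)$, and $c_1 = -\left(3\phi_{i,3}\, t_i(p_i)^2 + 2\phi_{i,2}\, t_i(p_i) + \phi_{i,1}\right)$. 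These are the same coefficient groupings used for the speed deviation in Theorem \ref{thm:deviation on speed}, one degree higher.

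Next I would invoke the GP regression result, namely that the marginal posterior of the time-trajectory error at $p_i$ is univariate normal, $e_i(p_i)\sim\mathcal{N}\!\left(\mu_e(p_i),\sigma_e^2(p_i)\right)$, as given in \eqref{eq:GPPosMean}--\eqref{eq:GPPosVar}. Since \eqref{eq:cubicForm} exhibits $f_i(\hat{t}_i(p_i))$ as a degree-three polynomial transformation of the normal random variable $e_i(p_i)$, it matches exactly the defining form of the cubic normal (power-method) distribution of \cite{fleishman1978method}; hence $f_i(t)$ follows a cubic normal distribution, and since $p_i$ was arbitrary the conclusion holds pointwise along the trajectory.

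The statement is essentially immediate once Lemma \ref{lem:actualPositionTraj} is available, so there is no genuine analytical obstacle—the only care needed is the bookkeeping that collects $c_1, c_2, c_3$ correctly from \eqref{eq:pos Eval hat t Step2} and the observation that the transformation carries no constant term (equivalently $c_0 = 0$), which is still a special case of the cubic normal family. If one further wished to report the posterior mean and variance of $f_i$, paralleling \eqref{eq:mu_g}--\eqref{eq:sigma_g} in Theorem \ref{thm:deviation on speed}, the same moment-generating-function argument applied to the normal variable $e_i(p_i)$ (now requiring its first through sixth moments, since $f_i^2$ contains $e_i^6$) would produce them in closed form; but these moments are not required to establish the distributional claim itself.
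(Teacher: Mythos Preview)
Your proposal is correct and follows essentially the same argument as the paper: fix an arbitrary $p_i$, collect the terms of \eqref{eq:pos Eval hat t Step2} into the cubic $a_3 e_i^3 + a_2 e_i^2 + a_1 e_i$ with the very coefficients you list (the paper calls them $a_1,a_2,a_3$), and observe that $e_i(p_i)$ is univariate normal so the transform is cubic normal. Your closing remark about computing the first two moments via the moment-generating function is exactly what the paper carries out separately in Proposition~\ref{prop:deviation in pos}.
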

\begin{proof}
Let $p_i\in\mathcal{P}_i$ be an arbitrary known position, with $p_i=p_i(t)$ and $\hat{t}_i(p_i)$ be the actual time, where CAV arrives at position $p_i$. From \eqref{eq:pos Eval hat t Step2}, we have $
f_i(\hat{t}_i(p_i)) = a_3\cdot e_i(p_i)^3 + a_2\cdot e(p_i)^2+a_1 \cdot e_i(p_i)$, where $e_i(p_i)$ is a normal variable,-
\begin{align}
    a_1 &= -3\phi_{i,3} \cdot t_i(p_i)^2 -2\phi_{i,2} \cdot t_i(p_i) - \phi_{i,1},\label{Cor:a_1}\\
    a_2 &= - 3\phi_{i,3} \cdot t_i(p_i)-\phi_{i,2},\label{Cor:a_2}\end{align}
    and $a_3 =-\phi_{i,3}$. Since $a_1,a_2$, and $a_3$ are not random variables, the proof is complete.
\end{proof}
\begin{proposition}\label{prop:deviation in pos}
For CAV $i\in\mathcal{N}(t)$, posterior mean and variance of $f_i(t)$ at actual time $\hat{t}_i(p_i)$, where vehicle arrives at position $p_i$ can be derived from
\begin{align}
    \mu_f(\hat{t}_i(p_i)) &= a_1 \,\mu_e +a_3 \,{\left(\mu_e^3 +3\,\mu_e \,\sigma_e^2 \right)}+a_2 \,{\left(\mu_e^2 +\sigma_e^2 \right)}\\
    \sigma^2_f(\hat{t}_i(p_i)) &=\sigma_e^2 \,[{a_1 }^2 +4\,a_1 \,a_2 \,\mu_e +6\,a_1 \,a_3 \,\mu_e^2 +  6\,a_1 \,a_3 \,\sigma_e^2 + \nonumber \\ &4\,{a_2 }^2 \,\mu_e^2 + 
     2\,{a_2 }^2 \,\sigma_e^2 +12\,a_2 \,a_3 \,\mu_e^3 +24\,a_2 \,a_3 \,\mu_e \,\sigma_e^2 \nonumber\\+&9\,{a_3 }^2 \,\mu_e^4 +36\,{a_3 }^2 \,\mu_e^2 \,\sigma_e^2 +15\,{a_3 }^2 \,\sigma_e^4 ],
\end{align}
where $\mu_e = \mu_e(p_i)$, $\sigma_e= \sigma_e(p_i)$.
\end{proposition}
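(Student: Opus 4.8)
The plan is to follow exactly the moment-based strategy used in the proof of Theorem~\ref{thm:deviation on speed}, since the preceding Corollary has already reduced $f_i(\hat{t}_i(p_i))$ to the cubic $a_3\, e_i(p_i)^3 + a_2\, e_i(p_i)^2 + a_1\, e_i(p_i)$ in the single normal variable $e_i(p_i)$, with $a_1, a_2, a_3$ deterministic. Fixing an arbitrary position $p_i\in\mathcal{P}_i$ with $p_i=p_i(t)$, I treat $e_i(p_i)$ as a univariate normal with mean $\mu_e=\mu_e(p_i)$ and variance $\sigma_e^2=\sigma_e^2(p_i)$ obtained from the GP posterior \eqref{eq:GPPosMean}--\eqref{eq:GPPosVar}. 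Everything then reduces to computing raw moments of this single normal variable, so no new conceptual machinery beyond Theorem~\ref{thm:deviation on speed} is required.

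First I would recover the moments $\E[e_i(p_i)^n]$ for $n=1,\dots,6$ from the moment-generating function $M_e(\tau)=\exp(\tau\mu_e+\tfrac{1}{2}\sigma_e^2\tau^2)$ via $\E[e_i(p_i)^n]=\frac{d^n}{d\tau^n}M_e(\tau)\big|_{\tau=0}$, exactly as in the earlier proof. The first three are $\mu_e$, $\mu_e^2+\sigma_e^2$, and $\mu_e^3+3\mu_e\sigma_e^2$; the higher three are $\mu_e^4+6\mu_e^2\sigma_e^2+3\sigma_e^4$, $\mu_e^5+10\mu_e^3\sigma_e^2+15\mu_e\sigma_e^4$, and $\mu_e^6+15\mu_e^4\sigma_e^2+45\mu_e^2\sigma_e^4+15\sigma_e^6$. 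Note that the cubic (as opposed to the quadratic of Theorem~\ref{thm:deviation on speed}) forces me to go up to the sixth moment, which is the source of the lengthier variance expression.

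By linearity of expectation, the posterior mean is $\mu_f(\hat{t}_i(p_i))=a_1\E[e_i(p_i)]+a_2\E[e_i(p_i)^2]+a_3\E[e_i(p_i)^3]$, and substituting the first three moments immediately yields the stated formula for $\mu_f$. For the variance I would use $\sigma^2_f(\hat{t}_i(p_i))=\E[f_i(\hat{t}_i(p_i))^2]-\mu_f(\hat{t}_i(p_i))^2$. Expanding the square of the cubic gives $\E[f_i(\hat{t}_i(p_i))^2]=a_3^2\,\E[e_i^6]+2a_2a_3\,\E[e_i^5]+(a_2^2+2a_1a_3)\,\E[e_i^4]+2a_1a_2\,\E[e_i^3]+a_1^2\,\E[e_i^2]$, and inserting the moments above followed by subtracting $\mu_f^2$ produces the displayed expression for $\sigma_f^2$.

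The main obstacle is purely bookkeeping: collecting the terms of $\E[f_i^2]$ against $\mu_f^2$ so that every contribution \emph{not} carrying a factor of $\sigma_e^2$ cancels. A useful structural check here is that setting $\sigma_e=0$ makes $\E[f_i^2]=(a_3\mu_e^3+a_2\mu_e^2+a_1\mu_e)^2=\mu_f^2$, forcing $\sigma_f^2=0$; hence the entire variance must factor as $\sigma_e^2$ times a polynomial in $\mu_e$ and $\sigma_e^2$, exactly as written in the proposition. What remains is the careful tracking of the many cross-terms through sixth order so that the surviving coefficients match $a_1^2,\,4a_1a_2\mu_e,\,\dots,\,15a_3^2\sigma_e^4$; this is the same tedious-but-routine calculation already flagged in Theorem~\ref{thm:deviation on speed}, with no additional conceptual difficulty.
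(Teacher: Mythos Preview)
Your proposal is correct and follows exactly the approach the paper intends: the paper's own proof simply states that it is ``similar to Theorem~\ref{thm:deviation on speed}, hence it is omitted,'' and your moment-generating-function computation through sixth order is precisely that similarity spelled out. The structural check that $\sigma_e=0$ forces $\sigma_f^2=0$ is a nice sanity device for the bookkeeping.
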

\begin{proof}
The proof is similar to Theorem \ref{thm:deviation on speed}, hence it is omitted.
\end{proof}
\subsection{Confidence Bounds on trajectories}
After characterizing the uncertainty in the actual time trajectory based on the noisy observations of the actual time trajectory and leveraging GP regression, we construct a bounded confidence interval for random process $e_i(p_i)$, denoted by $E_i(p_i)\subset\mathcal{E}_i$, within which $e_i(p_i)$ lies with probability $P_e$ as follows 
\begin{align}
    E_i(p_i) &= \left[\mu_e(p_i)-z\sigma_e(p_i),~ \mu_e(p_i)+z\sigma_e(p_i)\right],\\
    z &= \sqrt{2}\erf^{-1}(P_e),
\end{align}
where $\mu_e(p_i)$ and $\sigma_e(p_i)$ are posterior mean and standard deviation of $e_i(p_i)$ at position $p_i\in\mathcal{P}_i$, respectively, and $\erf^{-1}(\cdot)$ is the inverse error function. 
Using Chebyshev's inequality, we construct a bounded confidence interval for the random process $f_i(t)$, denoted by $F_i(t)\subset\mathcal{P}_i$, within which $f_i(t)$ lies with at least probability $P_f$.
\begin{align}
    &F_i(p_i) = \left[\mu_f(t)-z\sigma_f(t),~ \mu_f(t)+z\sigma_f(t)\right],\\
    &\mathbb{P}\left(f_i(t)\in F_i(t)\right)\geq P_f= 1-\dfrac{1}{z^2}.
\end{align}
Deviation from speed trajectory, $g_i(t)$, follows a Chi-squared distribution which is a uni-modal distribution, i.e., its distribution permits a Lebesgue density that is non-decreasing up  to a mode and non-increasing thereafter. This unimodality allows us to employ a tighter bound for the confidence interval using Vysochanskii-Petunin inequality \cite{pukelsheim1994three}. We construct a bounded confidence interval for the random process $g_i(t)$, denoted by $G_i(t)\subset\mathcal{V}_i$, within which $g_i(t)$ lies with at least probability $P_g$ as follows
\begin{align}
    &G_i(p_i) = \left[\mu_g(t)-z\sigma_g(t),~ \mu_g(t)+z\sigma_g(t)\right],\\
    &\mathbb{P}\left(g_i(t)\in G_i(t)\right)\geq P_g= 1-\dfrac{4}{9z^2}.
\end{align}
The solution to Problem \ref{prb:mintfProblemProbab} is the optimal nominal trajectories for CAV $i\in\mathcal{N}(t)$ satisfying the safety constraints in the presence of uncertainty, which is modeled through GP based on the possibly noisy observations of the time trajectory. 
\eat{
\begin{remark}
Deterministic trajectories  $\Bar{t}_i(p_i),\Bar{p}_i(t),\Bar{v}_i(t)$, and $\Bar{u}_i(t)$, resulted from Problem \ref{prb:mintfProblem}, that do not violate any constraints in the presence of uncertainty, would be equal to the nominal trajectories obtained from Problem $\ref{prb:mintfProblemProbab}$, $t_i(p_i),p_i(t),v_i(t)$, and $u_i(t)$.
\end{remark}}

\section{Simulation Results}

To demonstrate the effectiveness of our proposed framework, we investigate the coordination of $24$ CAVs at a signal-free intersection shown in fig.\ref{fig:intersection}. The CAVs enter the control zone from $6$ different paths shown in fig. \ref{fig:intersection} with the total rate of $3600$ veh/hour and their initial speeds uniformly distributed between $12$ m/s to $14$ m/s. 
We use the following parameters for the simulation: $t_h = 0.5 $ s, $v_{\min}=0.25$ m/s, $v_{\max}=30$ m/s, $u_{\max}=2$ m/s$^2$, $u_{\min}=-2$ m/s$^2$, $\gamma=1.5$ m, $\varphi=0.5$ s, $p^z=50$ m, $P_e=P_f=P_g= 95$\%. Videos from our simulation analysis can be found at the supplemental site, \url{https://sites.google.com/view/ud-ids-lab/RBST}.

We consider the actual deviation from the nominal time trajectory for CAV $i\in\mathcal{N}(t)$ is given by function $e_i(p_i) = 0.012\log\left(1+p_i\right)^{1.5}$ which is not known to CAV $i$ a priori. Each CAV by using a GP regression, makes $n=50$ observations of this deviation before reaching at uncertainty characterization point. Upon reaching the uncertainty characterization point, it obtains the posterior deviations in time trajectory. By employing Theorem \ref{thm:deviation on speed} and Proposition \ref{prop:deviation in pos}, CAV $i$ computes first two moments of deviations in speed and position trajectories. Then, it constructs the $95$\% bounded confidence intervals which can be used to create a tube around the nominal trajectories. Figs. \ref{fig:devInTime} and \ref{fig:DevinPos} demonstrate the posterior deviations in time and position along with corresponding $95$\% confidence bounds after observing $50$ samples of actual deviation in time trajectory.

\begin{figure}[ht]
    \centering
\includegraphics[width=0.95\linewidth]{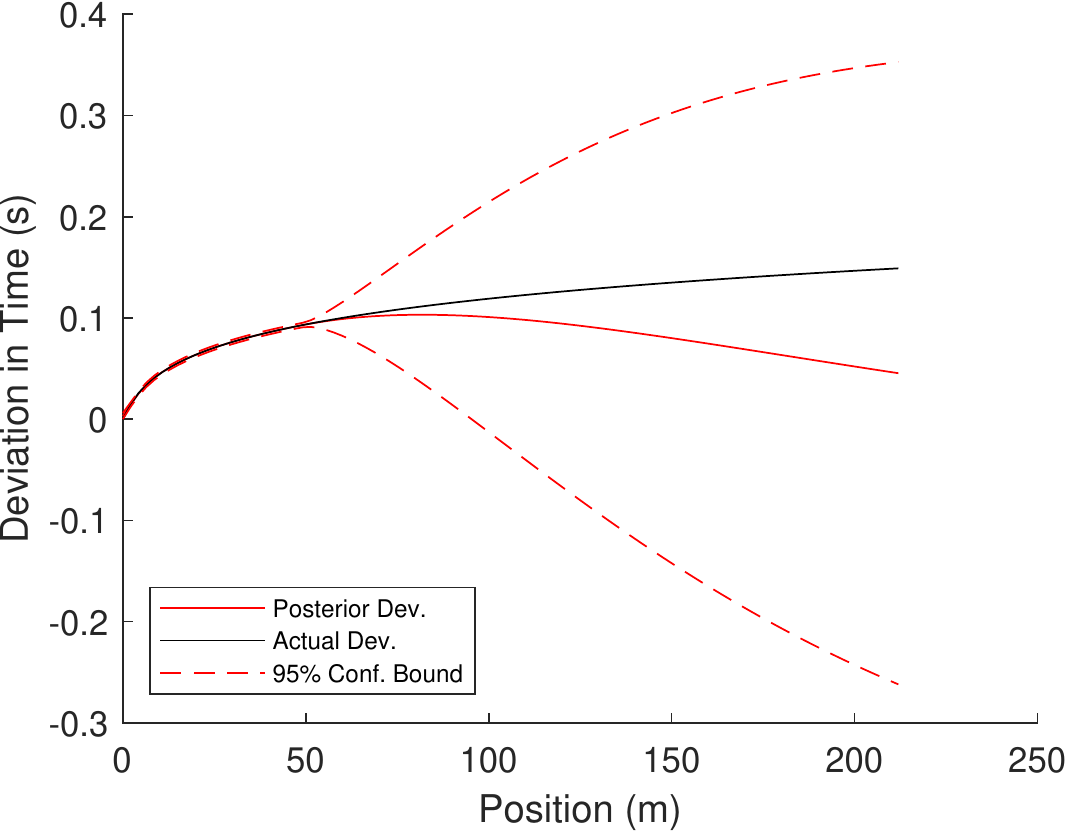}    \caption{Deviation in time trajectory.}
    \label{fig:devInTime}
\end{figure}

\begin{figure}[ht]
    \centering
\includegraphics[width=0.95\linewidth]{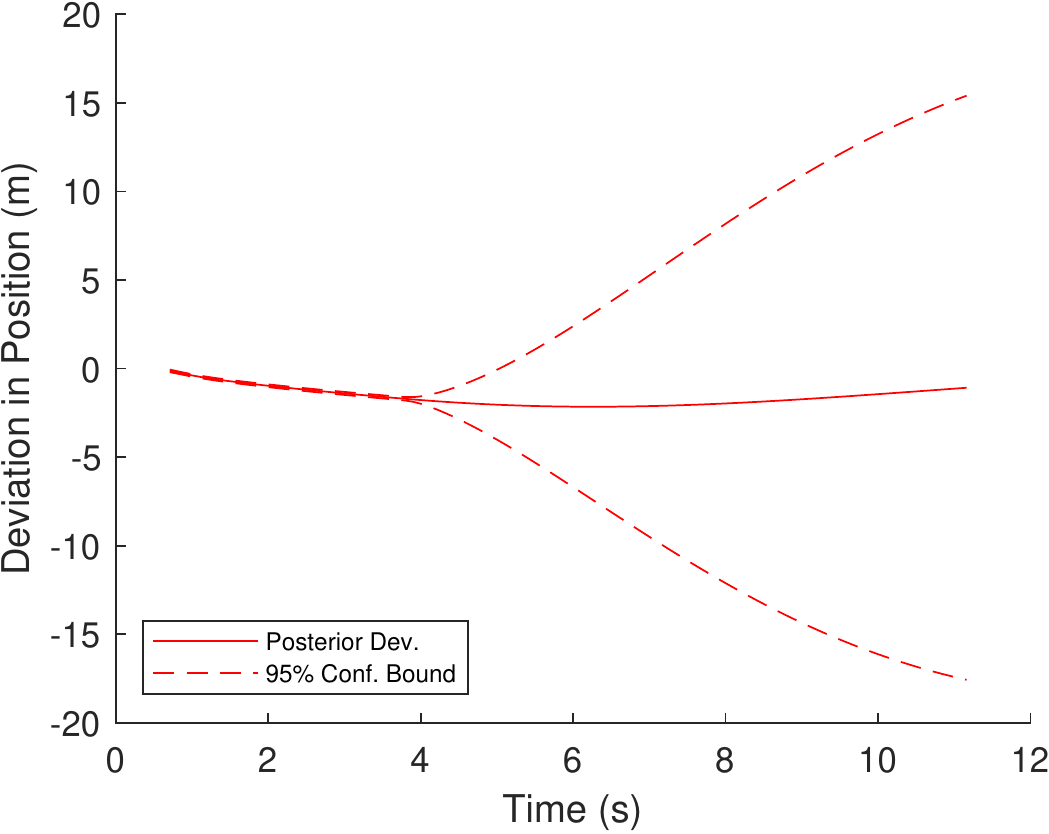}    \caption{Deviation in position trajectory.}
    \label{fig:DevinPos}
\end{figure}

Fig. \ref{fig:timeTrajDeviationProposed} illustrates time trajectories of CAVs traveling from westbound to eastbound. The CAVs nominal trajectories on this path are denoted with solid lines, while their corresponding rear-end safety constraints \eqref{eq:ChanceRearendSafety} are visualized with dotted lines in the same color. Replanning events that are due to the change of the trajectories of other CAVs in the control zone are shown with blue asterisks, and the uncertainty characterization point at which each CAV quantifies its trajectory is shown with a black square marker.  The $95$\% confidence bounds of the time trajectories are shown with dashed lines. Moreover, the CAVs from other paths that have the potential for lateral collision with CAVs in this path are shown with vertical thicker lines. Their arrival times at the conflict points with $95$\% confidence bound are shown in red, and corresponding lateral safety constraints \eqref{eq:ChanceLateralSafety} are shown with vertical black lines. This figure shows that by using our robust framework, CAVs' nominal trajectories satisfy the safety constraints for every realization of the deviations from the nominal trajectories. Fig. \ref{fig:timeTrajDeviationBaseline} visualize the case where CAVs stick to their initial planned nominal trajectories, ignoring uncertainty. It can be seen that for multiple cases, the trajectories of the CAVs with $95$\% confidence bounds cross the vertical lines representing the lateral constraints.

\begin{figure}[ht]
    \centering
\includegraphics[width=0.99\linewidth]{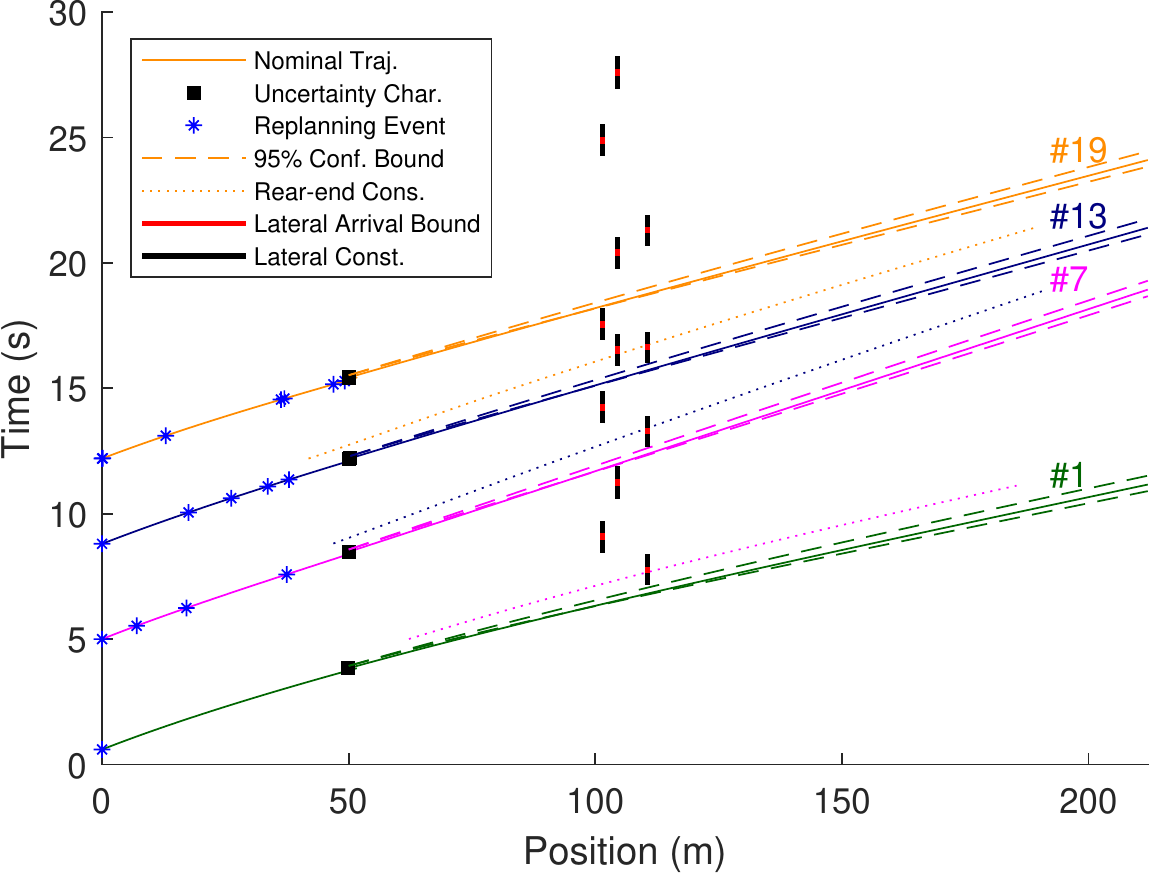}    \caption{Time trajectories of CAVs under robust coordination.}
    \label{fig:timeTrajDeviationProposed}
\end{figure}

\begin{figure}[ht]
    \centering
\includegraphics[width=0.99\linewidth]{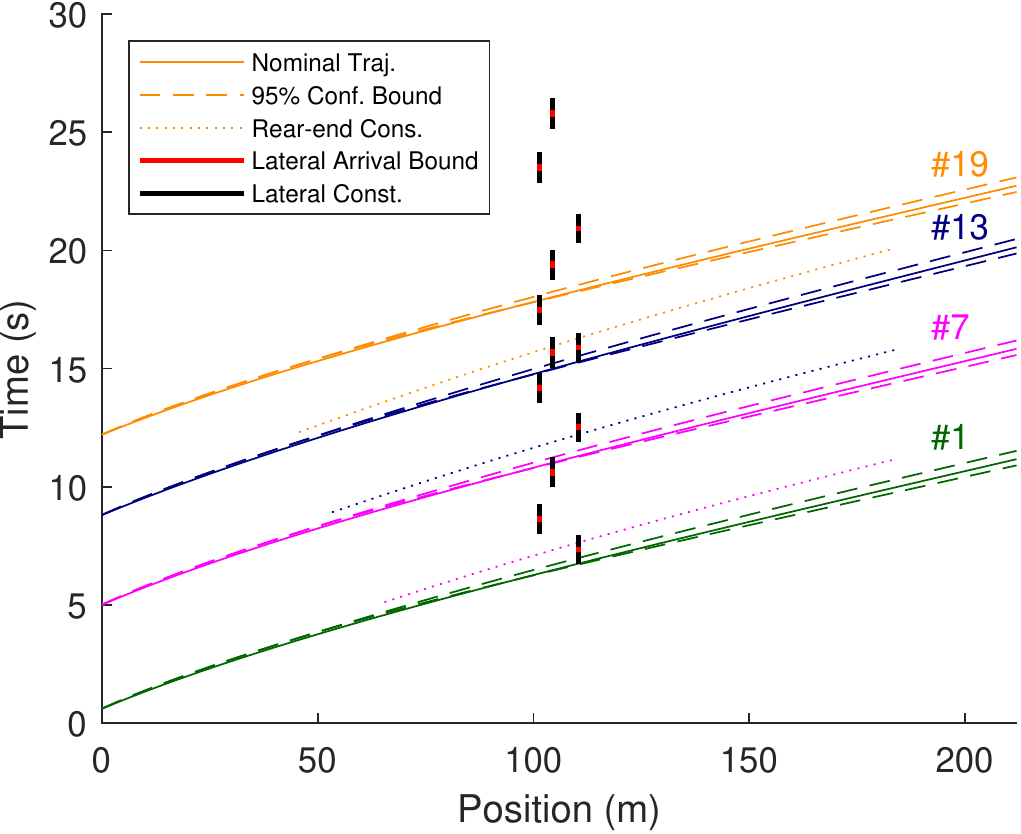}    \caption{Time trajectories of CAVs under deterministic coordination.}
    \label{fig:timeTrajDeviationBaseline}
\end{figure}

We demonstrate the control input trajectory of CAV \#$19$ traveling from westbound to eastbound under the robust coordination framework in Fig. \ref{fig:ControlInputTraj}. It can be seen that at replanning events shown with blue asterisks, CAV \#$19$ solves Problem \ref{prb:mintfProblemProbab} with updated trajectories of other CAVs and new information about their uncertainty. Upon reaching the uncertainty characterization point shown with a black square, CAV \#$19$ learns the deviation in its nominal trajectories and solves Problem \ref{prb:mintfProblemProbab} with this new information. It communicates back the new nominal trajectories along with its characterization of uncertainty to the coordinator. Then, the coordinator broadcasts a replanning event for all CAVs which entered the control zone after CAV \#$19$ to replan their trajectories.

\begin{figure}[ht]
    \centering
\includegraphics[width=0.99\linewidth]{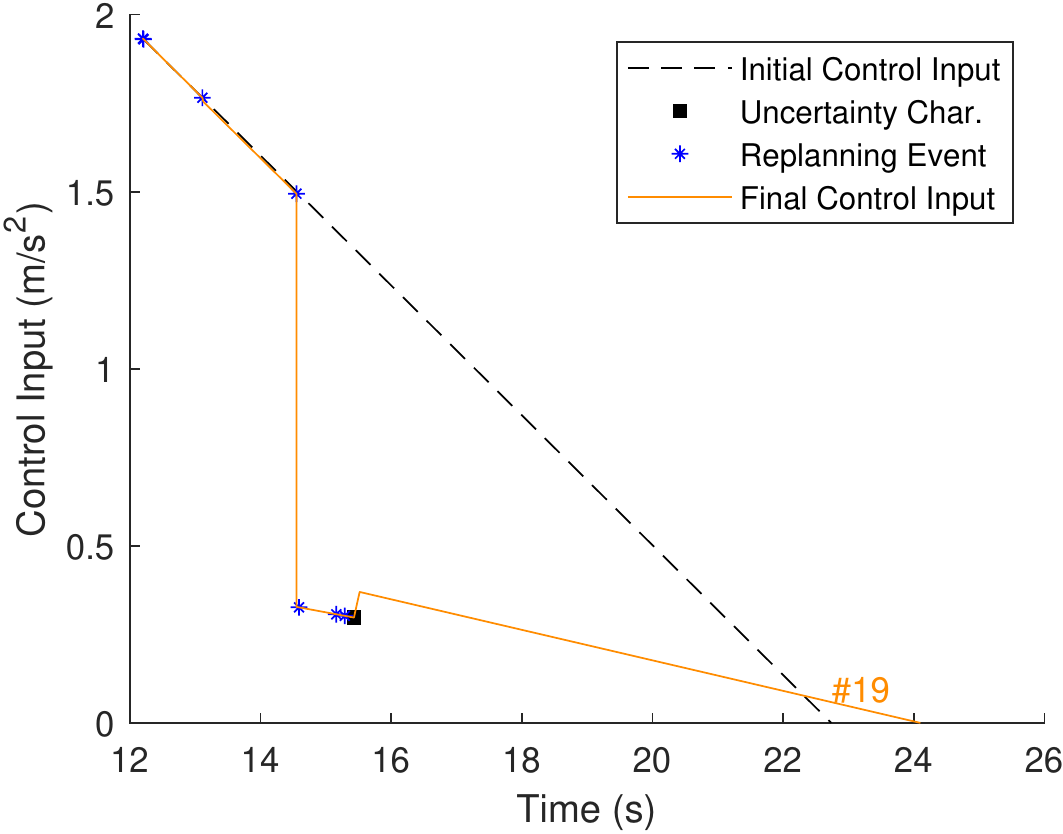}    \caption{Control input trajectory of a CAV under robust coordination. }
    \label{fig:ControlInputTraj}
\end{figure}

\section{Concluding Remarks and Discussion} 
In this paper, we extended a framework we developed earlier for coordination of CAVs in \cite{Malikopoulos2020} to include the deviations from the nominal trajectories as uncertainty and reformulated it as a robust coordination problem. We adopted the data-driven approach, GP regression, to learn the uncertainty from the possibly noisy observation of CAVs' time trajectories. After obtaining the statistical knowledge about the deviation from nominal trajectories, we constructed the confidence interval for time, position, and speed trajectories using the inverse error function, Chebyshev's inequality, and  Vysochanskii-Petunin inequality, respectively. We, finally, demonstrated the effectiveness of our proposed framework through a numerical simulation. Some potential directions for future research include considering delay in the communication and validating the proposed framework in the experimental testbed.

\bibliographystyle{IEEEtran.bst} 
\bibliography{reference/IDS_Publications_09142021.bib, reference/ref.bib}

\end{document}